\renewcommand\section{\@startsection{section}{1}{\z@}%
                                  {-3.5ex \@plus -1ex \@minus -.2ex}%
                                  {2.3ex \@plus.2ex}%
                                  {\normalfont\large\bfseries}}
\begin{document}

\title{Efficient domination in regular graphs}

\author{Misa Nakanishi \thanks{E-mail address : nakanishi@2004.jukuin.keio.ac.jp}}
\date{}
\maketitle

\newtheorem{thm}{Theorem}[section]
\newtheorem{lem}{Lemma}[section]
\newtheorem{prop}{Proposition}[section]
\newtheorem{cor}{Corollary}[section]
\newtheorem{rem}{Remark}[section]
\newtheorem{conj}{Conjecture}[section]
\newtheorem{claim}{Claim}[section]
\newtheorem{fact}{Fact}[section]
\newtheorem{obs}{Observation}[section]

\newtheorem{defn}{Definition}[section]
\newtheorem{propa}{Proposition}
\renewcommand{\thepropa}{\Alph{propa}}
\newtheorem{conja}[propa]{Conjecture}

\section{Notation}

In this paper, a graph $G$ is finite, undirected, and simple with the vertex set $V$ and edge set $E$. We follow \cite{Diestel} for basic notation. For a vertex $v \in V(G)$, the open neighborhood, denoted by $N_G(v)$, is $\{u \in V(G)\colon\ uv \in E(G)\}$, and the closed neighborhood,  denoted by $N_G[v]$, is $N_G(v) \cup \{v\}$, also for a set $W \subseteq V(G)$, let $N_G(W) := \bigcup_{v \in W} N_G(v)$ and $N_G[W] := N_G(W) \cup W$. For a vertex $v \in V(G)$, let $N_G^{2}(v) := \{ w \in V(G)\colon\ d_G(v, w) = 2 \}$. A {\it dominating set} $X \subseteq V(G)$ is such that $N_G[X] = V(G)$. A dominating set $X$ is {\it efficient} if $X$ is independent and every vertex of $V(G) \setminus X$ is adjacent to exactly one vertex of $X$.

\section{Efficient domination in regular graphs}

\begin{thm}\label{thm1}
Let $G$ be a connected regular graph. Whether $G$ has efficient dominating sets is determined in polynomial time.
\end{thm}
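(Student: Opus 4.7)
The plan is to exploit the strong structural constraints that an efficient dominating set imposes on a regular graph. First, I would observe that if $X$ is an efficient dominating set of a $k$-regular graph $G$, then $\{N_G[x]\colon\ x \in X\}$ partitions $V(G)$ into cells of size exactly $k+1$; in particular $k+1$ must divide $|V(G)|$, and any two distinct vertices of $X$ are at distance at least $3$. This gives an immediate necessary condition and also the key propagation principle: once some vertex $u$ is declared to lie in $X$, the entire status of $N_G[u]$ and $N_G^{2}(u)$ is determined, since every neighbor of $u$ is outside $X$ and has $u$ as its unique $X$-neighbor, which in turn forces every vertex of $N_G^{2}(u)$ to lie outside $X$.

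Building on this, I would run a branch-and-propagate procedure. Pick any vertex $v$. Since $v \in N_G[X]$, exactly one of the $k+1$ vertices of $N_G[v]$ lies in $X$, so there are only $k+1$ initial branches to try. In each branch I fix some $u \in N_G[v]$ as a member of $X$ and then apply the following forcing rules: (i) if a vertex is declared in $X$, its neighbors and their other neighbors are declared outside $X$; (ii) if a vertex declared outside $X$ has all but one of its neighbors already declared outside $X$, the remaining neighbor is forced into $X$. I would iterate these rules until no further deduction is possible, rejecting the branch if any constraint is violated and otherwise verifying that the completed labeling is indeed an efficient dominating set.

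The main obstacle is proving that this propagation genuinely decides the problem in polynomial time across all $k+1$ initial branches. A priori, after placing $u$ in $X$, a vertex $z \in N_G^{2}(u)$ may have several candidates in $N_G(z)$ at distance $3$ from $u$ for its unique $X$-neighbor, forcing further branching and potentially exponential blowup. To rule this out, the proof must show that under $k$-regularity (and connectedness), such apparent choices are in fact always either already forced by the rules above or can be reduced to a polynomial-time solvable subproblem --- for instance by analyzing how the cell structure $\{N_G[x]\colon\ x \in X\}$ interacts with the BFS layers from $u$, or by reducing the residual decisions to $2$-SAT or to a suitable matching problem. This forcing-and-reduction step is the technical heart of the theorem and is where I would expect the bulk of the work to lie.
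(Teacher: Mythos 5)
Your plan stops exactly where the difficulty begins. The partition observation and the $k+1$ initial branches are fine, but the forcing rules (i)--(ii) do not by themselves decide membership: after fixing $u \in X$, a vertex $z \in N_G^{2}(u)$ typically has several neighbours at distance $3$ from $u$ as candidate dominators, and rule (ii) only fires when all but one of them has already been excluded, which in general never happens without further branching. You acknowledge this and say the proof ``must show'' that such choices are always forced or reducible to 2-SAT or matching, but you give no such argument, so what you have is a programme rather than a proof. Moreover, this deferred step cannot be a routine verification: deciding whether a graph has an efficient dominating set (a perfect code) is known to be NP-complete even when restricted to $3$-regular graphs, so any completion of your sketch by local consistency rules plus a polynomial reduction would amount to proving $\mathrm{P}=\mathrm{NP}$ (and the same concern applies to the theorem as stated). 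The gap you flag is therefore not a technical loose end but the entire content of the claim.

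For comparison, the paper follows essentially the same propagation strategy you describe: it repeatedly deletes a vertex $v$ when some $c \in N_G^{2}(v)$ has no remaining candidate dominator outside $N_G(v)$ (its Fact~2.1), and it tests a candidate $a'$ by deleting $N_G(a') \cup N_G^{2}(a')$, re-propagating, and rejecting $a'$ exactly when the surviving set becomes empty (its Proposition~2.1). The analogue of your missing step is the direction ``if the surviving set $B_i(a)$ is nonempty, then $a$ lies in some efficient dominating set,'' which the paper attempts to prove by stitching together paths carrying labels at every third vertex into a set $Y$ and invoking regularity to conclude that $Y$ is efficient. That is precisely the step you correctly identified as the technical heart; in your write-up it is absent, so the argument is incomplete as it stands, and given the known hardness of the problem you should treat any purported completion of it with considerable suspicion.
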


\begin{proof}
Let $i$ be an integer such that $i \geq 0$. Let $A_i \subseteq V(G)$. Let $X$ be an efficient dominating set of $G$ such that $X \subseteq A_i$ if there exists.
Let $\mathcal{X}_i$ be the set of all $X$.
The next facts are easily verified from the definition of $X$.

\begin{fact}\label{fact}
Let $v \in A_i$.
If $(N_G(c) \setminus N_G(v)) \cap A_i = \emptyset$ for some $c \in N_G^{2}(v) \ne \emptyset$, then $v \not \in X$ for all $X \in \mathcal{X}_i$. (We say that the vertex $v$ satisfies Fact \ref{fact}.)
\end{fact}

\begin{proof}
Let $v \in X$ for some $X \in \mathcal{X}_i$. Suppose $(N_G(c) \setminus N_G(v)) \cap A_i = \emptyset$ for some $c \in N_G^{2}(v) \ne \emptyset$. Since $X$ is a dominating set and $X \in \mathcal{X}_i$, $(N_G[c] \setminus (N_G(c) \setminus N_G(v)) ) \cap X \ne \emptyset$. It contradicts that $X$ is efficient. We obtain the statement.
\end{proof}

We say that $G$ is covered by paths {\it sequentially} if $G$ is covered by not necessarily disjoint paths but paths with overlap.

\begin{fact}\label{fact3-1}
If $Y \subseteq V(G)$ is a dominating set of $G$, then $G$ is covered by paths that have vertices of $Y$ at every three vertices sequentially; these paths are represented by $y_{1}b_{1,1}b_{1,2}y_{2}b_{2,1}b_{2,2} \cdots y_{h}$ or $y_{1}b_{1,1}b_{1,2}y_{2}b_{2,1}b_{2,2} \cdots y_{h}b_{h,1}$ such that $y_1, \cdots, y_h \in Y$ and $b_{1,1}, b_{1,2}, \cdots, b_{h,1}, b_{h,2} \in V(G) \setminus Y$ for some $h$ such that $h \geq 1$.
\end{fact}

\begin{fact}\label{fact3}
If $G$ is covered by paths that have labels at every three vertices sequentially; these paths are represented by $y_{1}b_{1,1}b_{1,2}y_{2}b_{2,1}b_{2,2} \cdots y_{h}$ or $y_{1}b_{1,1}b_{1,2}y_{2}b_{2,1}b_{2,2} \cdots y_{h}b_{h,1}$ such that $y_1, \cdots, y_h$ are labeled and $b_{1,1}, b_{1,2}, \cdots, b_{h,1}, b_{h,2}$ are not labeled for some $h$ such that $h \geq 1$, then the set of all labeled vertices is a dominating set of $G$. 
\end{fact}

\begin{prop}\label{fact4}
Suppose $\mathcal{X}_i \ne \emptyset$. For $a \in A_i$, let $A_{i + 1} := A_i \setminus (N_G(a) \cup N_G^{2}(a))$. 
Suppose that the following steps are applied to $A_{i + 1}$. 
\begin{enumerate}
\item[(a-1)] Drop a vertex $v \in A_{i + 1}$ that satisfies Fact \ref{fact} from $A_{i + 1}$, and let $A_{i + 2} := A_{i + 1} \setminus \{v\}$.
\item[(a-2)] Increment $i$, and go to (a-1).
\end{enumerate}
For the final set $A_j$ ($j \geq i + 1$), let $B_i(a) := A_j$.
Then $a \not \in X$ for all $X \in \mathcal{X}_i$ if and only if $B_i(a) = \emptyset$.
\end{prop}

\begin{proof}
Let $a \in X$ for some $X \in \mathcal{X}_i$. Now, $X \subseteq A_{i + 1}$. By Fact \ref{fact}, if $B_i(a) = \emptyset$ then $a \not \in X$ for all $X \in \mathcal{X}_i$. Conversely, let $a \not \in X$ for all $X \in \mathcal{X}_i$ and $B_i(a) \ne \emptyset$. By Fact \ref{fact}, $B_i(a)$ is a dominating set of $G$. Set $A_i = X_1 \cup \cdots \cup X_l \cup F$ where $\mathcal{X}_i = \{X_1, \cdots, X_l\}$ ($l \geq 1$). If for some $X \in \mathcal{X}_i$, $X \subseteq B_i(a)$, then $a \in X$. That is, for all $h \in \{1, 2, \cdots, l\}$, $X_h \not \subseteq B_i(a)$. 

Let $y_1 := a$. Let $x_1 \in N_G(y_1) \cap X_1$. It suffices that $|X_1| \geq 2$. By Fact \ref{fact}, if $c \in N_G^{2}(y_1)$ then $(N_G(c) \setminus N_G(y_1)) \cap B_i(a) \ne \emptyset$. Let $y_2 \in (N_G(c) \setminus N_G(y_1)) \cap B_i(a)$ for some $c \in N_G^{2}(y_1)$. By the same argument as Fact \ref{fact}, we define recursively $y_k \in B_i(a)$ for $2 \leq k \leq p$ $(p \geq 2)$. Let $Y := \{ y_k\colon\ 1 \leq k \leq p\}$. Here, $y_{i_1}, \cdots, y_{i_h} \in Y$ for some $i_1, \cdots, i_h$ such that $1 \leq i_1, \cdots, i_h \leq p, h \geq 1$ are located at every three vertices on a path in $G$, represented by $y_{i_1}b_{1,1}b_{1,2}y_{i_2}b_{2,1}b_{2,2} \cdots y_{i_h}$ or $y_{i_1}b_{1,1}b_{1,2}y_{i_2}b_{2,1}b_{2,2} \cdots y_{i_h}b_{h,1}$ such that $b_{1,1}, b_{1,2}, \cdots, b_{h,1}, b_{h_2} \in V(G) \setminus Y$, and $G$ is covered by such paths sequentially. (It is possible because $B_i(a)$ is a dominating set and Fact \ref{fact3-1}.) Set $X_1 = \{x_1, x_2, \cdots, x_q\}$ $(q > 1)$. More strictly, we prove that $y_k \in N_G[x_k] \cap Y$ for $1 \leq k \leq q$ and $p = q$ are also possible.

\begin{claim}
It is possible to take $Y$ as $y_k \in N_G[x_k] \cap Y$ for $1 \leq k \leq q$ and $p = q$.
\end{claim}

\begin{proof}
Suppose $y_{j_1}, y_{j_2} \in N_G[x_k] \cap Y$ for some $j_1$ and $j_2$ such that $1 < j_1 < j_2 \leq p$ and for some $k$ such that $1 < k \leq q$. It suffices that $y_{j_2} \in N_G(x_k)$. Let $P_1$ be a path such that vertices of $Y$ and vertices of $X_1$ are adjacent and located at every three vertices on the sequence, and $y_1, y_{j_2} \in V(P_1)$. Set $y_{j_2}P_{1}y_{1} = y_{j_2}a_{w}x_{w}y_{w} \cdots a_{1}x_{1}y_{1}$ for some $w$ such that $w \geq 1$, $y_1, \cdots, y_w \in Y$, $x_1, \cdots, x_w \in X_1 \setminus Y$, and $a_1, \cdots, a_w \in V(G) \setminus (X_1 \cup Y)$. Case (I) $y_{j_1} \in N_G(x_k)$. Case (I-i) $d_G(y_{j_1}, y_{j_2}) = 2$. By Fact \ref{fact}, it follows that $(N_G(y_{j_2}) \setminus N_G(y_{j_1})) \cap B_i(a) \ne \emptyset$. Let $y_{\alpha} \in (N_G(y_{j_2}) \setminus N_G(y_{j_1})) \cap B_i(a)$. Case (I-i-1) $d_G(a_w, y_{\alpha}) = 2$. By Fact \ref{fact}, it follows that $(N_G(a_w) \setminus N_G(y_{\alpha})) \cap B_i(a) \ne \emptyset$. Now, we can take $y_{\beta} \in (N_G(a_w) \setminus N_G(y_{\alpha})) \cap B_i(a)$ and avoid $y_{j_2}$; let $Y := (Y \cup \{y_{\beta}\}) \setminus \{y_{j_2}\}$. Case (I-i-2) $d_G(a_w, y_{\alpha}) = 1$. Now, we can take $y_{\alpha}$ and avoid $y_{j_2}$; let $Y := (Y \cup \{y_{\alpha}\}) \setminus \{y_{j_2}\}$. Case (I-ii) $d_G(y_{j_1}, y_{j_2}) = 1$ and $d_G(a_w, y_{j_1}) = 2$. By Fact \ref{fact}, it follows that $(N_G(a_w) \setminus N_G(y_{j_1})) \cap B_i(a) \ne \emptyset$. Now, we can take $y_{\alpha} \in (N_G(a_w) \setminus N_G(y_{j_1})) \cap B_i(a)$ and avoid $y_{j_2}$; let $Y := (Y \cup \{y_{\alpha}\}) \setminus \{y_{j_2}\}$. Case (I-iii) $d_G(a_w, y_{j_1}) = 1$. Now, we can avoid $y_{j_2}$; let $Y := Y \setminus \{y_{j_2}\}$. Case (II) $y_{j_1} = x_k$. By Fact \ref{fact}, it follows that $(N_G(a_w) \setminus N_G(y_{j_1})) \cap B_i(a) \ne \emptyset$. Now, we can take $y_{\alpha} \in (N_G(a_w) \setminus N_G(y_{j_1})) \cap B_i(a)$ and avoid $y_{j_2}$; let $Y := (Y \cup \{y_{\alpha}\}) \setminus \{y_{j_2}\}$. Case (II-a) While, for some $a_{-1} \in N_G(y_{j_1}) \setminus \{y_{j_2}\}$, we can also consider a path $a_{-1}y_{j_1}y_{j_2}P_{1}y_{1}$. Case (II-a-i) $d_G(y_{j_2}, a_{-1}) = 2$. By Fact \ref{fact}, it follows that $(N_G(a_{-1}) \setminus N_G(y_{j_2})) \cap B_i(a) \ne \emptyset$. Now, we can take $y_{\alpha} \in (N_G(a_{-1}) \setminus N_G(y_{j_2})) \cap B_i(a)$ and avoid $y_{j_1}$; let $Y := (Y \cup \{y_{\alpha}\}) \setminus \{y_{j_1}\}$. Case (II-a-ii) $d_G(y_{j_2}, a_{-1}) = 1$. Now, we can avoid $y_{j_1}$; let $Y := Y \setminus \{y_{j_1}\}$. That is, $y_k \in N_G[x_k] \cap Y$ for $1 \leq k \leq q$ and $p = q$ are possible.
\end{proof}

By Fact \ref{fact3}, $Y$ is a dominating set of $G$. Since $G$ is regular and $p = q$, $Y$ is an efficient dominating set of $G$, where $Y \subseteq B_i(a)$. It contradicts that for all $h \in \{1, 2, \cdots, l\}$, $X_h \not \subseteq B_i(a)$. The proof is complete. 

\end{proof}

Let $A_0 := V(G)$ and $i := 0$. We follow the next steps. 
\begin{enumerate}
\item[(1)] Drop a vertex $v \in A_i$ that satisfies Fact \ref{fact} from $A_i$, and let  $A_{i + 1} := A_i \setminus \{v\}$.
\item[(2)] Increment $i$, and go to (1).
\end{enumerate}
By these steps, we have the final set $A_j$ ($j \geq 0$). \\

Let $i := j$. We follow the next steps. 
\begin{enumerate}
\item[(3)] Select a vertex $a \in A_i$.
\item[(4)] Let $l_1 := i$.
\item[(5)] Select a vertex $a' \in N_G[a] \cap A_i$, and drop $N_G(a') \cap A_i$ and $N_G^{2}(a') \cap A_i$ from $A_i$, and let $A_{i + 1} := A_i \setminus (N_G(a') \cup N_G^{2}(a'))$.
\item[(6)] Drop a vertex $v \in A_{i + 1}$ that satisfies Fact \ref{fact} from $A_{i + 1}$, and let $A_{i + 2} := A_{i + 1} \setminus \{v\}$.
\item[(7)] Increment $i$, and go to (6).
\item[(8)] Let $l_2 := i + 2$. If the final set $A_{l_2} = \emptyset$, then let $i := l_1$ and go to (5). 
\end{enumerate}
By these steps, we have the final set $A_{k_1}$ ($k_1 \geq j$). Let $i := k_1$ and follow the steps (3)-(8). By these steps, we have the final set $A_{k_2}$ ($k_2 \geq k_1$). By repeating this, we have the final set $A_{k_m}$ ($k_m \geq \cdots \geq k_2 \geq k_1$, $m \geq 1$).

\begin{prop}\label{cc}
$\mathcal{X}_0 = \emptyset$ if and only if $A_{k_m} = \emptyset$. Here, $A_{k_m}$ is determined in polynomial time.
\end{prop}

\begin{proof}
By step (1) - (2), we drop the vertices that satisfy Fact \ref{fact}, that is, the vertices that are not contained in all $X \in \mathcal{X}_0$. If $A_j = \emptyset$, then $\mathcal{X}_0 = \emptyset$. By step (3) - (7), if $A_{l_2} = \emptyset$, then by Proposition \ref{fact4}, $a' \not \in X$ for all $X \in \mathcal{X}_0$. If $\mathcal{X}_0 \ne \emptyset$, then for some $a'$, $A_{l_2} \ne \emptyset$, and so $A_{l_2}$ is a dominating set. By repeating step (3) - (8), we have the final set $A_{k_m} = X$ for some $X \in \mathcal{X}_0$. Obviously, if $\mathcal{X}_0 = \emptyset$ then $A_{k_m} = \emptyset$. All the steps are in polynomial time.
\end{proof}

By Proposition \ref{cc}, we complete the proof of Theorem \ref{thm1}.

\end{proof}

\end{document}